\def\A{\mathbb{A}}
\def\F{\mathbb{F}}
\theoremstyle{plain}
\newtheorem{thm}{Theorem}
\newtheorem{Lem}[thm]{Lemma}
\newtheorem{Cor}[thm]{Corollary}
\theoremstyle{remark}
\renewcommand\footnotemark{}
\providecommand{\keywords}[1]{{\textit{Keywords:}} #1}
\providecommand{\MSC}[1]{{\textit{MSC codes:}} #1}
\begin{document}

\title{An application of random plane slicing to counting 
$\F_q$--points on hypersurfaces}
\author{Kaloyan Slavov\thanks{This research was
supported by NCCR SwissMAP of the SNSF.}}

\maketitle

\begin{abstract}
Let $X$ be an absolutely irreducible hypersurface of degree $d$ in $\A^n$, defined over a finite field $\F_q$. The Lang--Weil bound gives an interval that contains $\#X(\F_q)$. We exhibit explicit intervals which do {\it not} contain $\#X(\F_q)$, and which overlap with the Lang--Weil interval. In particular, we sharpen the best known lower and upper bounds for 
$\#X(\F_q)$.  The proof uses a 
probabilistic combinatorial technique.

\end{abstract}

\keywords{hypersurface, Lang--Weil bound, Bertini's theorem, random sampling}

\MSC{14G15, 14J70, 11G25}

\section{Introduction}

Let $\F_q$ be the finite field with $q$ elements and let $\A^n$ be the affine space over $\F_q.$ 
Let $X\subset\A^n$ (where $n\geq 3$) be a geometrically irreducible hypersurface of degree $d$, defined over $\F_q.$ 

We recall the following 

\begin{thm}[Weil, \cite{Weil}] Let $X$ be an absolutely irreducible plane curve of degree $d$. Then
\begin{equation}
|\#X(\F_q)-q|\leq (d-1)(d-2)\sqrt{q}+d+1.
\label{LW_curve}
\end{equation}
\label{Weil_curve}
\end{thm}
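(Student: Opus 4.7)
The plan is to reduce to the Hasse--Weil bound for a smooth projective curve. First I would take the projective closure $\bar X\subset\P^2$ of $X$ and form its normalization $\nu\colon\tilde X\to\bar X$. Since $X$ is absolutely irreducible, so is $\tilde X$, which is therefore a smooth projective geometrically irreducible curve over $\F_q$ of some geometric genus $g$, to which the classical Hasse--Weil inequality applies.

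Next I would invoke that inequality, $|\#\tilde X(\F_q)-(q+1)|\leq 2g\sqrt{q}$, together with the genus estimate $g\leq (d-1)(d-2)/2$. The estimate on $g$ is standard: any plane curve of degree $d$ has arithmetic genus $(d-1)(d-2)/2$, and normalization can only decrease the genus (by the sum of local $\delta$-invariants at the singularities). This yields
\[
|\#\tilde X(\F_q)-(q+1)|\leq (d-1)(d-2)\sqrt{q}.
\]

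The last and most delicate step is to compare $\#\tilde X(\F_q)$ with $\#X(\F_q)$. Here one has the bookkeeping identity
\[
\#\tilde X(\F_q)-\#X(\F_q)=\#(\bar X\setminus X)(\F_q)+\sum_{p\in\bar X^{\sing}(\F_q)}\bigl(|\nu^{-1}(p)(\F_q)|-1\bigr).
\]
The first summand counts $\F_q$-rational points at infinity and is at most $d$, since $\bar X\cap L_\infty$ is cut out on the line at infinity by a form of degree $d$. The second summand records how $\nu$ adds or removes $\F_q$-rational points over singularities of $\bar X$ (it can be positive when a node is split into two $\F_q$-rational branches, or negative when an $\F_q$-rational singular point has branches only defined over $\bar\F_q$). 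I expect this precise bookkeeping to be the main obstacle: one needs to bound the number of geometric branches over the $\F_q$-rational singular locus and match it against the genus drop so as to conclude that $|\#X(\F_q)-\#\tilde X(\F_q)|\leq d$.

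Once that is in hand, the triangle inequality
\[
|\#X(\F_q)-q|\leq|\#X(\F_q)-\#\tilde X(\F_q)|+|\#\tilde X(\F_q)-(q+1)|+1
\]
immediately delivers $|\#X(\F_q)-q|\leq (d-1)(d-2)\sqrt{q}+d+1$. A cleaner alternative to the bookkeeping is to apply an Aubry--Perret-type refinement of Hasse--Weil directly to the (possibly singular) projective curve $\bar X$, and then simply subtract the at most $d$ points at infinity.
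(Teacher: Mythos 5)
The paper does not prove this statement at all: Theorem \ref{Weil_curve} is quoted as a known result of Weil, so there is no internal proof to compare against. Your overall strategy (projective closure, normalization, Hasse--Weil for the smooth model with $g\leq p_a=(d-1)(d-2)/2$, then point-count bookkeeping) is the standard derivation, and your arithmetic at the end is consistent with the stated constant $d+1$.

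There is, however, one genuine gap: the intermediate target $|\#X(\F_q)-\#\tilde X(\F_q)|\leq d$ that you set for the bookkeeping step is false in general, so that step cannot be completed as framed. Take an absolutely irreducible rational plane curve of degree $d$ whose singularities are $(d-1)(d-2)/2$ nodes, each split over $\F_q$ (two $\F_q$-rational branches). Then $\tilde X\cong\P^1$ and the normalization map identifies pairs of rational points, so $\#\tilde X(\F_q)-\#\bar X(\F_q)=(d-1)(d-2)/2$, which exceeds $d$ for every $d\geq 5$; the $\leq d$ points at infinity do not change this. The bound you want is only true after trading the branch excess against the genus drop: the correct inequality is $|\#\bar X(\F_q)-\#\tilde X(\F_q)|\leq p_a-g$ (Aubry--Perret), and then
\[
|\#\bar X(\F_q)-(q+1)|\leq 2g\sqrt{q}+(p_a-g)=g(2\sqrt{q}-1)+p_a\leq 2p_a\sqrt{q}=(d-1)(d-2)\sqrt{q},
\]
after which subtracting the at most $d$ points at infinity and the $+1$ from replacing $q+1$ by $q$ gives exactly \eqref{LW_curve}. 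In other words, the route you list as a ``cleaner alternative'' is not optional: it (or an equivalent argument pairing each singular point's branch count against its $\delta$-invariant) is the only way to close the argument, and you should promote it to the main line of the proof.
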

In the higher-dimensional case, Lang and Weil \cite{Lang_Weil} have established the bound
\[|\#X(\F_q)-q^{n-1}|\leq (d-1)(d-2)q^{n-3/2}+O_d(q^{n-2}).\]
There have been various results and improvements on the implied constant. 
In the case $n=3$, the best explicit bound that we were able to find in the literature is the $n=3$ case of the
following outcome of advanced
$l$-adic \'{e}tale cohomology techniques:

\begin{thm}[Ghorpade \& Lachaud, \cite{GL}]
Let $X\subset\A^n$ be an absolutely irreducible hypersurface over $\F_q$ of degree $d$. Then
\begin{equation}
|\#X(\F_q)-q^{n-1}|\leq (d-1)(d-2)q^{n-3/2}+12(d+3)^{n+1}q^{n-2}.
\label{LW_surface}
\end{equation}
\end{thm}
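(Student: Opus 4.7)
My plan is to deduce the bound from the Grothendieck--Lefschetz trace formula together with Deligne's theorem (Weil II), following the standard strategy that underlies all effective Lang--Weil type estimates. First, I would pass to the projective closure $\bar X \subset \P^n$ and write
\[
\#X(\F_q) = \#\bar X(\F_q) - \#(\bar X \cap H_\infty)(\F_q).
\]
The hyperplane-at-infinity slice is a closed subscheme of $\P^{n-1}$ cut out by equations of degree at most $d$, so by crude degree-based counting its $\F_q$-count is absorbed into the error term. Thus the crux is to estimate $\#\bar X(\F_q)$.

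The trace formula gives
\[
\#\bar X(\F_q) = \sum_{i=0}^{2(n-1)} (-1)^i \mathrm{tr}\bigl(\mathrm{Frob}_q \mid H^i(\bar X_{\bar\F_q},\Q_\ell)\bigr).
\]
Since $\bar X$ is absolutely irreducible of dimension $n-1$, the top cohomology is one-dimensional and Frobenius acts on it by $q^{n-1}$, producing the main term. Deligne's theorem bounds each Frobenius eigenvalue on $H^i$ by $q^{i/2}$ in absolute value, so it remains to control the Betti numbers of $\bar X$. The coefficient $(d-1)(d-2)$ of the secondary term $q^{n-3/2}$ should come from a sharp bound on the relevant primitive part of $H^{2n-3}$, which for a smooth hypersurface can be read off from the Jacobian-ring (Griffiths--Dwork) description and which is consistent with Weil's $(d-1)(d-2)$ in \eqref{LW_curve}.

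The main obstacle is that $\bar X$ need not be smooth, so the classical Betti-number formulas are not directly available. To overcome this one can either pass to a resolution (or, more uniformly, an alteration) and exploit the associated Leray spectral sequence, or work with intersection cohomology; in either case one needs a Katz-style effective estimate of the shape $\sum_i \dim H^i(\bar X,\Q_\ell) \leq C(d+3)^{n+1}$ valid for an \emph{arbitrary} projective hypersurface. Establishing this inequality with the explicit constant $12$ appearing in the statement is the genuine technical heart of \cite{GL}, and is the step I expect to be most delicate, since the degeneration of Hodge/weight structures at the singular locus must be tracked uniformly in $d$ and $n$.
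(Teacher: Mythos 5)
First, a point of orientation: the paper does not prove this theorem. It is quoted from Ghorpade--Lachaud \cite{GL} as an external input (the surrounding text explicitly calls it an ``outcome of advanced $l$-adic \'etale cohomology techniques''), so there is no internal proof to compare yours against. Judged on its own terms, your proposal correctly names the strategy of \cite{GL} --- Grothendieck--Lefschetz trace formula, Deligne's weight bounds, and a reduction to controlling $\ell$-adic Betti numbers of a possibly singular projective hypersurface --- but it is a plan rather than a proof. The two quantitative ingredients that constitute the statement, namely the coefficient $(d-1)(d-2)$ on $q^{n-3/2}$ and the bound $12(d+3)^{n+1}$ absorbing everything at level $q^{n-2}$, are both deferred to the reference; you yourself flag the latter as ``the genuine technical heart,'' which means the proposal leaves the entire content of the theorem unestablished.

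Beyond incompleteness, one of your heuristics points in the wrong direction. You suggest the $(d-1)(d-2)$ should be read off from the Jacobian-ring (Griffiths--Dwork) description of the primitive cohomology of a smooth hypersurface. But for a \emph{smooth} hypersurface $\bar X\subset\P^n$ of dimension $n-1\geq 2$, the Lefschetz theorems give $H^{2n-3}(\bar X_{\bar\F_q},\Q_\ell)\cong H^{2n-3}(\P^{n-1},\Q_\ell)=0$ (it is an odd degree above the middle), so the $q^{n-3/2}$ term is absent altogether in the smooth case. The entire $(d-1)(d-2)q^{n-3/2}$ contribution is a phenomenon of the singular locus: \cite{GL} control the penultimate cohomology of a singular hypersurface by an argument that ultimately traces back to curve sections and Weil's bound (\ref{LW_curve}), not to the Jacobian ring. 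Relatedly, applying Deligne's eigenvalue bounds on each $H^i$ of a proper but \emph{singular} variety requires the weight arguments of Weil~II (weights $\leq i$ on $H^i$ of a proper variety), which is precisely where the resolution/alteration or intersection-cohomology machinery you gesture at must actually be deployed, with all constants tracked. So the outline is the right outline, but none of the estimates are carried out, and the one concrete mechanism you propose for the secondary term does not produce it.
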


The best explicit bound for $n\geq 4$ that we are aware of is given by 

\begin{thm}[Cafure \& Matera, \cite{Cafure_Matera}]
For an absolutely irreducible hypersurface $X\subset\A^n$ of degree $d$ over $\F_q$,
\begin{equation}
|\#X(\F_q)-q^{n-1}|\leq (d-1)(d-2)q^{n-3/2}+5d^{13/3}q^{n-2}.
\label{LW_general}
\end{equation}
Moreover, if $q>15d^{13/3},$ then
\begin{equation}
|\#X(\F_q)-q^{n-1}|\leq (d-1)(d-2)q^{n-3/2}+
(5d^2+d+1)q^{n-2}.
\label{LW_regularity}
\end{equation}

\label{LW_bound}
\end{thm}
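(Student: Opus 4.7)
The plan is to reduce to Weil's bound (\ref{LW_curve}) by slicing $X$ with $2$-dimensional affine planes and summing the point counts of the resulting plane curves. Let $\mathcal{P}$ denote the set of $\F_q$-rational $2$-dimensional affine subspaces of $\A^n$, and let $N_0$ be the number of such planes through a fixed $\F_q$-point. Double-counting the pairs $(x,H)$ with $x\in X(\F_q)\cap H$ yields
\[
N_0\cdot\#X(\F_q)=\sum_{H\in\mathcal{P}}\#(X\cap H)(\F_q).
\]
Both $N_0$ and $|\mathcal{P}|$ can be written down explicitly, with $|\mathcal{P}|/N_0=q^{n-2}$.

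I would split the sum according to whether $C_H:=X\cap H$ is a \emph{good} section (an absolutely irreducible plane curve of degree $d$ over $\F_q$) or a \emph{bad} one. For good $H$, Weil's theorem (\ref{Weil_curve}) gives $|\#C_H(\F_q)-q|\leq(d-1)(d-2)\sqrt{q}+d+1$; summing over good $H$ and dividing by $N_0$ produces the main term $q^{n-1}$ together with the Weil-type error $(d-1)(d-2)q^{n-3/2}$. For bad $H$, the trivial bound $\#C_H(\F_q)\leq dq$ (Schwartz--Zippel applied to the degree-$d$ polynomial obtained by restricting the equation of $X$ to $H$) is enough, so the remaining task is to show that the number of bad planes is at most $O(d^{\alpha})\cdot|\mathcal{P}|/q$ for a suitable $\alpha$.

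This is where an effective Bertini theorem enters. The locus $\mathcal{P}_{\mathrm{bad}}\subset\mathcal{P}$ on which $X\cap H$ fails to be geometrically irreducible of degree $d$ is contained in a proper subvariety of the $3(n-2)$-dimensional parameter space of $2$-planes, of codimension at least one and of degree controlled polynomially in $d$. Counting $\F_q$-points on that subvariety and combining with the trivial bound on bad sections yields the secondary error $O(d^{13/3}q^{n-2})$. The sharper estimate $(5d^{2}+d+1)q^{n-2}$ under the hypothesis $q>15d^{13/3}$ should follow by observing that once $q$ is large relative to $d$, the $\F_q$-points of $\mathcal{P}_{\mathrm{bad}}$ themselves concentrate on an even lower-dimensional subscheme, so a second iteration of the same double-counting argument applied to $\mathcal{P}_{\mathrm{bad}}$ improves the constant.

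The hard part will be making the effective Bertini estimate completely explicit with the stated constants. One has to show, in terms of $d$ alone, that the discriminant-like locus of $\F_q$-planes $H$ for which $X\cap H$ is reducible or has a component not defined over $\F_q$ is cut out by polynomial conditions of total degree $O(d^{13/3})$ on the parameter space. Extracting the specific exponent $13/3$, rather than a cruder polynomial bound in $d$, requires a careful analysis of Kaltofen-type irreducibility criteria applied to the pullback of the defining equation of $X$ along a parametrization of $H$; this bookkeeping, rather than the overall geometric strategy, is the delicate step.
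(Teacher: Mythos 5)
This statement is quoted from Cafure--Matera \cite{Cafure_Matera}; the paper you are reading gives no proof of it, only a one-sentence description of their strategy in the introduction. Your outline does match that strategy --- average $\#(X\cap H)(\F_q)$ over $2$-planes $H$, apply Weil's bound (\ref{LW_curve}) to the slices that are absolutely irreducible curves, and control the exceptional slices by an effective Bertini/Kaltofen-type count --- so at the level of architecture you have reconstructed the right argument. But what you have written is a plan, not a proof. The entire quantitative content of the theorem is concentrated in the two steps you explicitly defer: (i) showing that the locus of planes $H$ for which $X\cap H$ fails to be absolutely irreducible of degree $d$ has at most $O(d^{13/3})\cdot|\mathcal{P}|/q$ rational points, with an explicit constant that yields exactly $5d^{13/3}$ after combining with the trivial bound on bad slices; and (ii) the refined constant $5d^2+d+1$ for $q>15d^{13/3}$. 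Your suggestion that (ii) follows from ``a second iteration of the same double-counting argument applied to $\mathcal{P}_{\mathrm{bad}}$'' is a guess, not an argument; in \cite{Cafure_Matera} the improvement comes from a finer classification of the slices by their number of absolutely irreducible $\F_q$-components, not from iterating the averaging. Without these estimates the stated constants cannot be recovered, and they are the theorem.

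There is also a concrete error in the treatment of bad planes: the Schwartz--Zippel bound $\#(X\cap H)(\F_q)\le dq$ only applies when the restriction of the defining equation to $H$ is not identically zero, i.e.\ when $H\not\subset X$. A hypersurface of dimension $n-1\ge 2$ can contain $2$-planes (already a quadric in $\A^4$ does), and for such $H$ one has $\#(X\cap H)(\F_q)=q^2$, which is too large to absorb into the error term without a separate count of the planes contained in $X$. This is exactly why the paper itself, in its own arguments (Lemma \ref{maxupperbound} and the notion of ``very bad'' planes in Section \ref{section_upper}), treats the case $H\subset X$ separately; your sketch needs the same case distinction.
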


\hspace*{-0.6cm}
\begin{tikzpicture}
\draw[thick](-6.5,0)--(6.5,0);
\fill[green] (0,0)circle(3pt);
\node at (0,-0.5) {$q^{n-1}$};
\fill[green] (-6,0)circle(4pt);
\fill[green] (6,0)circle(4pt);
\draw[green, ultra thick](-6,0)--(-6,1)--(6,1)--(6,0);
\node at (-6.2,-0.5) {
$q^{n-1} -(d-1)(d-2)q^{n-3/2}
-5d^{13/3}q^{n-2}$
};
\end{tikzpicture}

\bigskip

The idea of Cafure and Matera is to estimate the number of two--dimensional affine planes $H\subset\A^n_{\F_q}$ for which the intersection $X\cap H$ has a given number of geometrically irreducible 
$\F_q$--components, and apply the Lang--Weil bound coming from the one--dimensional case (\ref{LW_curve}) for each such $X\cap H$. They refine the theorem of Kaltofen
(see \cite{Kaltofen}), which states that for 
$q>\frac{3}{2}d^4-2d^3+\frac{5}{2}d^2,$ there exists a plane $H$ such that $X\cap H$ is geometrically irreducible, by keeping track of the actual number of
geometrically irreducible  components.   

We combine the main idea of \cite{Cafure_Matera} with the technique of ``random plane slicing'' from
\cite{Tao_blog} and prove the following

\begin{thm} 
Let $X\subset\A^n$ (with $n\geq 3$) be a 
hypersurface over $\F_q$ of degree $d.$ 
Let $N=\#X(\F_q).$

Consider parameters $0<\alpha<1$ and $e>0$ such that
$e\alpha^2>1.$
If \[N>\max\left(\frac{d^2q^{n-2}}{4(1-\alpha)}, 
\frac{e}{e\alpha^2-1}q^{n-2}
\right),\]
then in fact
\[N>q^{n-1}-(d-1)(d-2)q^{n-3/2}-(d+1+e)q^{n-2}.\]
\label{values_skip}
\end{thm}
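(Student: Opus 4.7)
The plan is to use the random plane slicing technique of \cite{Tao_blog} within the Cafure--Matera framework of \cite{Cafure_Matera}. Let $\Pi$ denote the set of 2-dimensional affine planes $H\subset\A^n$ and set $N_H=\#(X\cap H)(\F_q)$. Since the probability that a uniformly random $H\in\Pi$ passes through a given $\F_q$-point is $q^{2-n}$, incidence double counting yields
\[
\mathbb{E}_H[N_H]=\mu\qquad\text{where }\mu:=Nq^{2-n}.
\]
An analogous count for pairs of distinct points, using that a random plane contains two given distinct points with probability $\leq q^{4-2n}$, produces the second moment estimate $\mathbb{E}_H[N_H^2]\leq\mu+\mu^2$.

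The next step is to classify each plane via Weil. Call $H$ \emph{good} if $X\cap H$ has at least one absolutely irreducible component defined over $\F_q$, and \emph{bad} otherwise. For good $H$, applying Theorem~\ref{Weil_curve} to such a component (whose degree $d'$ satisfies $(d'-1)(d'-2)\leq(d-1)(d-2)$ for $1\leq d'\leq d$) gives
\[
N_H\geq q-(d-1)(d-2)\sqrt q-(d+1).
\]
For bad $H$, every $\F_q$-point of $X\cap H$ must lie on an intersection of distinct Galois-conjugate components, so B\'ezout gives $N_H\leq d^2/4$.

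Writing $p_\mathrm{bad}$ for the fraction of bad planes in $\Pi$ and discarding the nonnegative bad contribution in the first-moment identity, I will obtain
\[
Nq^{2-n}=\mu\geq(1-p_\mathrm{bad})\bigl(q-(d-1)(d-2)\sqrt q-(d+1)\bigr).
\]
Multiplying through by $q^{n-2}$ and expanding shows that the conclusion of the theorem follows once we establish
\[
p_\mathrm{bad}\leq\frac{e}{q}.
\]

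The main obstacle will be proving this bad-plane bound. The key observation is that bad planes sit in the narrow range $N_H\in[0,d^2/4]$, separated from the good range $N_H\in[q-O(\sqrt q),dq]$ by a gap of size $\sim q$. The first hypothesis $N>d^2q^{n-2}/(4(1-\alpha))$ is equivalent to $\mu-d^2/4>\alpha\mu$, which is precisely the inequality that a Paley--Zygmund bound with threshold $d^2/4$, combined with the second moment estimate $\mathbb{E}_H[N_H^2]\leq\mu+\mu^2$, needs in order to produce a clean lower bound of the form $\alpha^2\mu/(1+\mu)$ on $\Pr_H[N_H>d^2/4]$. The role of the second hypothesis $(e\alpha^2-1)\mu>e$ is then to absorb the remaining slack and push this probability above $1-e/q$, thereby yielding $p_\mathrm{bad}\leq e/q$. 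Pinning down this final numerical interaction between the constants $\alpha,e$, the good/bad gap, and the constants in the Weil error is what I expect to be the delicate heart of the argument.
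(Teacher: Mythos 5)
Your setup---the mean $\mu=N/q^{n-2}$ and second-moment computation for random planes, the good/bad dichotomy via Weil and B\'ezout, and the first-moment inequality $\mu\geq(1-p_{\mathrm{bad}})\,(q-(d-1)(d-2)\sqrt q-(d+1))$---is exactly the paper's (Lemmas \ref{variance_bound} and \ref{dichotomy}). But the one step you defer is where the theorem actually lives, and the route you sketch for it does not close. First, the intermediate target $p_{\mathrm{bad}}\leq e/q$ is not obtainable directly from the hypotheses: the strongest bound the second moment yields is Chebyshev's
\[
p_{\mathrm{bad}}\leq\frac{\sigma^2}{(\mu-d^2/4)^2}\leq\frac{1}{\alpha^2\mu}
\]
(using the first hypothesis in the form $\mu-d^2/4>\alpha\mu$), and the second hypothesis only guarantees $\mu>e/(e\alpha^2-1)$, so a priori $p_{\mathrm{bad}}$ is bounded only by the \emph{constant} $(e\alpha^2-1)/(e\alpha^2)$, not by $e/q$. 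Second, Paley--Zygmund is the wrong inequality here: as you state it, it gives $\Pr[N_H>d^2/4]\geq\alpha^2\mu/(1+\mu)$, whose right-hand side is at most $\alpha^2<1$, so the resulting bound $p_{\mathrm{bad}}\leq 1-\alpha^2\mu/(1+\mu)$ never drops below $1-\alpha^2$ and cannot be ``pushed above $1-e/q$'' by any deployment of the hypotheses. You need the two-sided deviation bound around the mean (Chebyshev with $\sigma^2\leq\mu$, as in Lemma \ref{prob_bad_hplane}), not a ratio-of-moments bound.

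The paper sidesteps the target $e/q$ entirely. Writing $L=q-(d-1)(d-2)\sqrt q-(d+1)$, Chebyshev gives $p_{\mathrm{bad}}<1/(\alpha^2\mu)$, hence
\[
\mu>\Bigl(1-\frac{1}{\alpha^2\mu}\Bigr)L,
\qquad\text{i.e.}\qquad
L<\frac{\alpha^2\mu^2}{\alpha^2\mu-1}=\mu+\frac{\mu}{\alpha^2\mu-1},
\]
and the second hypothesis enters precisely as the equivalence $\mu>\frac{e}{e\alpha^2-1}\iff\frac{\mu}{\alpha^2\mu-1}<e$; this yields $\mu>L-e$, which is the conclusion after multiplying by $q^{n-2}$. (If you prefer to keep your structure, it can be repaired by bootstrapping: Chebyshev plus the second hypothesis first give $p_{\mathrm{bad}}<(e\alpha^2-1)/(e\alpha^2)$, hence $\mu>L/(e\alpha^2)$; feeding this back into Chebyshev gives $p_{\mathrm{bad}}<e/L$, and then $\mu\geq(1-e/L)L=L-e$ suffices. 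But the single algebraic rearrangement above is cleaner, and note that what one proves is $p_{\mathrm{bad}}\leq e/L$, which is slightly weaker than $e/q$ yet exactly enough.)
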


Notice that in case $d$ is large, namely
\[d\geq 2\left(\frac{e(1-\alpha)}{e\alpha^2-1}\right)^{1/2},\]
 the condition on $N$ in the theorem becomes simply 
 $N>\dfrac{d^2q^{n-2}}{4(1-\alpha)}.$
  
It is enlightening to write down the statement of Theorem \ref{values_skip} for a concrete choice of the parameters: for example, when 
$\alpha=2/3, e=9,$ we can state the following

\begin{Cor}
Let $X\subset\A^n$ ($n\geq 3$) be a hypersurface over $\F_q$ of degree 
$d.$ Let $N=\#X(\F_q).$ 
\[\text{If\ } N>\frac{3}{4}d^2q^{n-2},
\ \text{then in fact}\quad 
N>q^{n-1}-(d-1)(d-2)q^{n-3/2}-(d+10)q^{n-2}.\]
\label{cor_explicit}
\end{Cor}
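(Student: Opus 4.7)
The plan is to specialize Theorem~\ref{values_skip} at $\alpha = 2/3$ and $e = 9$; the corollary is essentially a numerical unpacking of the theorem, with no new probabilistic or geometric input. First I would check the admissibility condition: $e\alpha^2 = 9\cdot(2/3)^2 = 4 > 1$, so these parameter values are legitimate. Next I would compute the two thresholds that appear inside the $\max$ in Theorem~\ref{values_skip}:
\[
\frac{d^2 q^{n-2}}{4(1-\alpha)} = \frac{3 d^2 q^{n-2}}{4},
\qquad
\frac{e\, q^{n-2}}{e\alpha^2 - 1} = \frac{9\, q^{n-2}}{3} = 3 q^{n-2},
\]
and observe that the coefficient in the conclusion is $d + 1 + e = d + 10$, matching the target error term $-(d+10)q^{n-2}$ exactly.

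It then remains to verify that the single hypothesis $N > \tfrac{3}{4}d^2 q^{n-2}$ of the corollary is strong enough to imply $N > \max\bigl(\tfrac{3}{4}d^2 q^{n-2},\, 3 q^{n-2}\bigr)$, so that Theorem~\ref{values_skip} may be invoked. For $d \geq 2$ this is automatic, since $\tfrac{3}{4}d^2 \geq 3$ and the first threshold dominates. The degenerate case $d = 1$ must be treated separately: there $X$ is an affine hyperplane, $N = q^{n-1}$, and the target inequality $q^{n-1} > q^{n-1} - 11 q^{n-2}$ is trivial.

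The main ``obstacle'' is therefore not really an obstacle; the only mildly delicate point is to notice that the two thresholds trade dominance near $d = 2$, so that one small separate check at $d = 1$ is required. Once the arithmetic identification of thresholds and the $d+1+e = d+10$ bookkeeping are carried out, the corollary follows immediately from Theorem~\ref{values_skip}.
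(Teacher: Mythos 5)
Your proposal is correct and matches the paper's (implicit) argument exactly: the corollary is stated as the specialization of Theorem~\ref{values_skip} at $\alpha=2/3$, $e=9$, with the same arithmetic $e\alpha^2=4>1$, $\tfrac{d^2}{4(1-\alpha)}=\tfrac34 d^2$, $\tfrac{e}{e\alpha^2-1}=3$, and $d+1+e=d+10$. Your extra observation that the second threshold only matters for $d=1$, where the statement is trivial, is a careful touch the paper leaves implicit.
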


In other words, $N$ cannot belong to the interval

\bigskip

\begin{tikzpicture}
\draw[thick](-5.5,0)--(3.5,0);
\node at (-1,0.5){{\Large\color{red}{$\times$}}};
\fill[red] (-5,0)circle(4pt);
\fill[red] (3,0)circle(4pt);
\draw[red, ultra thick](-5,0)--(-5,1)--(3,1)--(3,0);
\node at (-5,-0.7){
$\dfrac{3}{4}d^2q^{n-2}$
};
\node at (4,-0.5) {
$q^{n-1} -(d-1)(d-2)q^{n-3/2}
-(d+10)q^{n-2}$
};
\end{tikzpicture}

\bigskip

Suppose $X$ is geometrically irreducible. Then this forbidden interval overlaps with the best known Lang--Weil intervals in the various ranges for $q$, as summarized in the diagram below (we analyze the case $n\geq 4$; the pictures for $n=3$ are similar). When we can afford a slight loss of precision, we write $g(d)+\ldots$ for $g(d)+o(g(d))$. Note that the content of Corollary \ref{cor_explicit} concerns the regime $q\geq d^4+\dots.$

\bigskip

\begin{minipage}[t]{0.35\textwidth}
\vspace*{-1.8cm}
\begin{align*}
&{\text{a)}} &d^4+\ldots\leq &q\leq 1.5d^4+\ldots
\\[8ex]
&{\text{b)}} &1.5d^4+\ldots < &q< 5 d^{13/3}+\ldots\\[8ex]
&{\text{c)}} &5d^{13/3}+\ldots\leq &q\leq 15d^{13/3}+\ldots\\[8ex]
&{\text{d)}} &15d^{13/3}+\ldots< &q
\end{align*}
\end{minipage}%
\hfill
\hspace{0.5cm}
\begin{minipage}[t]{0.65\textwidth}
\begin{tikzpicture}[scale=0.5]
\draw[thick](-7,0)--(8,0);
\draw[green, ultra thick](-4,0)--(-4,2)--(7,2)--(7,0);
\fill[green](-4,0) circle (5pt);
\node at (-4,-0.6) {$0$};
\fill[green](7,0) circle (5pt);
\fill[black](0,0) circle(4pt);
\node at (0.7,-0.6) {$q^{n-1}$};

\draw[red, ultra thick](-3,0)--(-3,1)--(-1.5,1)--(-1.5,0);
\node at (-2.25,0.4){\color{red}{$\times$}};
\end{tikzpicture}

\bigskip

\begin{tikzpicture}[scale=0.5]

\draw[thick](-7,0)--(8,0);
\draw[green, ultra thick,dotted](-3.6,2)--(-3.6,0);
\draw[green, ultra thick](-3.6,2)--(7,2)--(7,0);
\fill[green](-3.6,0) circle (5pt);
\node at (-3.6,-0.6) {$1$};
\fill[green](7,0) circle (5pt);
\fill[black](0,0) circle(4pt);
\node at (0.7,-0.6) {$q^{n-1}$};

\draw[red, ultra thick](-3,0)--(-3,1)--(-1.5,1)--(-1.5,0);
\node at (-2.25,0.4){\color{red}{$\times$}};
\end{tikzpicture}

\bigskip

\begin{tikzpicture}[scale=0.5]
\draw[thick](-7,0)--(8,0);
\draw[green, ultra thick](-2.5,0)--(-2.5,2)--(2.5,2)--(2.5,0);
\fill[green](-2.5,0) circle (5pt);
\fill[green](2.5,0) circle (5pt);
\fill[black](0,0) circle(4pt);

\fill[black](0,0) circle(4pt);
\node at (0.7,-0.6) {$q^{n-1}$};

\fill[black](-4,0) circle (4pt);
\node at (-4,-0.6) {$0$};

\draw[red, ultra thick](-3,0)--(-3,1)--(-1.5,1)--(-1.5,0);
\node at (-2.25,0.4){\color{red}{$\times$}};
\end{tikzpicture}

\bigskip

\begin{tikzpicture}[scale=0.5]
\draw[thick](-7,0)--(8,0);
\draw[green, ultra thick](-2,0)--(-2,2)--(2,2)--(2,0);
\fill[green](-2,0) circle (5pt);
\fill[green](2,0) circle (5pt);
\fill[black](0,0) circle(4pt);
\node at (0.7,-0.6) {$q^{n-1}$};
\fill[black](-4,0) circle (4pt);
\node at (-4,-0.6) {$0$};

\draw[red, ultra thick](-3,0)--(-3,1)--(-1.5,1)--(-1.5,0);
\node at (-2.25,0.4){\color{red}{$\times$}};
\end{tikzpicture}

\end{minipage}

 \bigskip
 
In range a), the left endpoint of the Lang-Weil interval is $0$, since we do not know any nontrivial lower bound for $N$, when
 $q<\frac{3}{2}d^4-2d^3+\frac{5}{2}d^2$. In range b), 
the lower bound coming from (\ref{LW_general}) is vacuous, but  
Theorem 5.4 in \cite{Cafure_Matera} implies\footnote{The authors state a hypothesis $q>2d^4$ but their proof works in fact for $q>1.5d^4+\dots$. The dotted vertical line in our diagram stands for the fact that the argument in \cite{Cafure_Matera} actually gives a slightly better lower bound than the stated $N\geq 1$.} that $N\geq 1$.  In the ranges c) and d), the best known Lang--Weil intervals are given by (\ref{LW_general}) and (\ref{LW_regularity}), respectively.
We formulate the precise statement describing cases c) and d) as the following

\begin{Cor}
Let $X\subset\A^n$ ($n\geq 3$) be an absolutely irreducible hypersurface over $\F_q$ of degree 
$d.$ Let $N=\#X(\F_q).$ Suppose that
\[q\geq \frac{1}{4}\left((d-1)(d-2)+\sqrt{(d-1)^2(d-2)^2+20d^{13/3}+3d^2} \right)^2.\]
Then
\[N>q^{n-1}-(d-1)(d-2)q^{n-3/2}-(d+10)q^{n-2}.\] 	
\end{Cor}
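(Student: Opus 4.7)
The plan is to combine Corollary~\ref{cor_explicit} with the unconditional lower bound \eqref{LW_general}. Since Corollary~\ref{cor_explicit} converts the threshold hypothesis $N>\tfrac{3}{4}d^{2}q^{n-2}$ into precisely the desired inequality on $N$, it suffices to verify that this threshold is forced by the hypothesis on $q$.

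First I would invoke Theorem~\ref{LW_bound}, which unconditionally gives
\[
N\ \geq\ q^{n-1}-(d-1)(d-2)q^{n-3/2}-5d^{13/3}q^{n-2}.
\]
Thus it is enough to show that the right-hand side strictly exceeds $\tfrac{3}{4}d^{2}q^{n-2}$. Dividing by $q^{n-2}$ and substituting $u=\sqrt{q}$ reduces this to the quadratic inequality
\[
u^{2}-(d-1)(d-2)\,u-\bigl(5d^{13/3}+\tfrac{3}{4}d^{2}\bigr)\ >\ 0,
\]
whose positive root is
\[
u_{0}\ =\ \tfrac{1}{2}\Bigl((d-1)(d-2)+\sqrt{(d-1)^{2}(d-2)^{2}+20d^{13/3}+3d^{2}}\Bigr).
\]
The requirement $q\geq u_{0}^{2}$ is exactly the stated hypothesis of the corollary, which (away from the isolated, irrational boundary case $q=u_{0}^{2}$) yields the strict inequality needed to feed into Corollary~\ref{cor_explicit}.

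There is no substantive obstacle: the constant $3d^{2}$ appearing inside the square root in the hypothesis is precisely $4\cdot\tfrac{3}{4}d^{2}$, i.e.\ the term that completing the square picks up from the threshold in Corollary~\ref{cor_explicit}. In other words, the hypothesis on $q$ has been reverse-engineered to bridge the unconditional Cafure--Matera bound \eqref{LW_general} to the ``trigger'' of Corollary~\ref{cor_explicit}, and once that dictionary is set up the proof is pure algebra.
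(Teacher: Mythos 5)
Your proposal is correct and follows exactly the paper's (one-line) proof: use the Cafure--Matera bound (\ref{LW_general}) to show that the hypothesis on $q$ forces $N>\tfrac{3}{4}d^{2}q^{n-2}$, then apply Corollary \ref{cor_explicit}. You have simply written out the quadratic-root algebra that the paper leaves implicit.
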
  

\begin{proof}
	For such values of $q$, (\ref{LW_general}) implies that $N>\frac{3}{4}d^2q^{n-2}.$ 
\end{proof}
 
Compare this estimate with the lower bounds coming from (\ref{LW_general}) and (\ref{LW_regularity}). While we sharpen the best known nontrivial lower bounds for $N$, we find the existence of an exclusion zone in cases a) and b) no less interesting.

As an application, the bound in Corollary \ref{cor_explicit} sharpens Theorem 2 in \cite{Zahid}, concerning the existence of a smooth point on a hypersurface over $\F_q$.

\begin{Cor}
Let $G\in\F_q[x_1,...,x_n]$ be an absolutely irreducible polynomial of degree $d$, and let $H\in\F_q[x_1,...,x_n]$ be a polynomial of degree $e$, not divisible by $G$. Then there exists a nonsingular zero of $G$, which is not a zero of $H$, provided that
\[q>\frac{1}{4}\left((d-1)(d-2)+\sqrt{(d-1)^2(d-2)^2+4(d^2+de+10)}\right)^2.\]
\end{Cor}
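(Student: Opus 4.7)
Setting $X := V(G) \subset \A^n$, a geometrically irreducible hypersurface of degree $d$, the conclusion is equivalent to
\[
\#X(\F_q) \;>\; |X_{\sing}(\F_q)| + |(X\cap V(H))(\F_q)|,
\]
so the plan is to bound the two ``bad'' terms on the right by elementary Bézout estimates and match the remainder against the lower bound on $N:=\#X(\F_q)$ furnished by Corollary~\ref{cor_explicit}. Since $G$ is absolutely irreducible it cannot be a $p$-th power, so some $\partial_i G$ is nonzero; by degree it is coprime to $G$, and the standard estimate (a variety of dimension $\le n-2$ and degree $\le D$ carries at most $D q^{n-2}$ of the $\F_q$-points of $\A^n$) applied to $X \cap V(\partial_i G)$ yields $|X_{\sing}(\F_q)| \le d(d-1)q^{n-2}$. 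Since $G \nmid H$, the same estimate applied to $X \cap V(H)$ gives $|(X\cap V(H))(\F_q)| \le de \cdot q^{n-2}$. Altogether the bad locus carries at most $(d^2 - d + de)q^{n-2}$ rational points.

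Next I would decode the hypothesis on $q$: the stated quadratic-formula-type bound is equivalent, after isolating $\sqrt{q}$ and squaring, to
\[
q - (d-1)(d-2)\sqrt{q} \;>\; d^2+de+10,
\]
i.e.\ $q^{n-1} - (d-1)(d-2)q^{n-3/2} > (d^2+de+10)q^{n-2}$. In the large-$N$ alternative $N > \tfrac34 d^2 q^{n-2}$, Corollary~\ref{cor_explicit} upgrades this to
\[
N \;>\; q^{n-1} - (d-1)(d-2)q^{n-3/2} - (d+10)q^{n-2};
\]
subtracting the bad-point bound and invoking the numerical identity $(d+10)+(d^2-d+de)=d^2+de+10$ then leaves a strictly positive count of good $\F_q$-points, hence at least one by integrality.

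The main obstacle is the residual ``small-$N$'' alternative $N\le \tfrac34 d^2 q^{n-2}$, which Corollary~\ref{cor_explicit} alone does not forbid. In that regime one falls back on the original Zahid argument, or equivalently on the Cafure--Matera lower bound (Theorem~5.4 of \cite{Cafure_Matera}) combined with the same Bézout bookkeeping, to close the case within the stated range of $q$. The entire gain over Theorem~2 of \cite{Zahid} comes precisely from replacing the Lang--Weil error budget $5d^{13/3}$ of (\ref{LW_general}) by the much smaller $d+10$ of Corollary~\ref{cor_explicit} in the large-$N$ regime; this is exactly what allows the condition on $q$ to shrink from the order $d^{13/3}$ to the order $d^4$.
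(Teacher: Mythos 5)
Your main line of attack is exactly the paper's: the published proof is literally ``run the proof of Theorem~2 of \cite{Zahid}, replacing the Cafure--Matera lower bound (\ref{LW_general}) by the bound of Corollary~\ref{cor_explicit}.'' Your B\'ezout bookkeeping ($|X_{\sing}(\F_q)|\le d(d-1)q^{n-2}$ via a nonvanishing partial derivative, $|(X\cap V(H))(\F_q)|\le de\,q^{n-2}$), your decoding of the hypothesis on $q$ as $q-(d-1)(d-2)\sqrt q>d^2+de+10$, and the matching identity $(d+10)+(d^2-d+de)=d^2+de+10$ are all correct and are precisely what the one-line proof in the paper leaves implicit. In the regime where $N>\tfrac34 d^2q^{n-2}$ your argument is complete.

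The gap is the small-$N$ alternative, which you correctly flag but do not actually close. Your proposed fallbacks do not cover the stated range of $q$: the hypothesis of the corollary only forces $q$ to exceed roughly $(d-1)^2(d-2)^2=d^4-6d^3+\dots$, which lies \emph{below} the threshold $q>\tfrac32 d^4+\dots$ needed for Theorem~5.4 of \cite{Cafure_Matera} (which in any case only yields $N\ge 1$, far less than the $(d^2-d+de)q^{n-2}$ needed to beat the bad locus) and far below the threshold $q\gtrsim 5d^{13/3}$ at which Zahid's original argument, or the previous corollary's verification of $N>\tfrac34 d^2q^{n-2}$ via (\ref{LW_general}), becomes available. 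In the sub-range $(d-1)^2(d-2)^2\lesssim q\lesssim \tfrac32 d^4$ --- the paper's own ``range a)'' --- no nontrivial lower bound on $N$ is known, so the alternative $N\le\tfrac34 d^2q^{n-2}$ cannot be excluded by the tools at hand, and in that case the subtraction argument gives nothing. To be fair to you, the paper's own proof silently elides exactly the same issue: it never verifies the hypothesis $N>\tfrac34 d^2q^{n-2}$ of Corollary~\ref{cor_explicit} before invoking its conclusion. So your write-up is an honest reconstruction of the intended argument together with an accurate diagnosis of where it is incomplete; what is missing (from both your proposal and the paper) is either a restriction of the hypothesis on $q$ to the range where $N>\tfrac34 d^2q^{n-2}$ is known, or a genuinely new idea for the small-$N$ case.
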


\begin{proof}
In the proof of Theorem 2 in \cite{Zahid}, replace the bound coming from the Cafure--Matera estimate (\ref{LW_general}) by the estimate from Corollary \ref{cor_explicit}.
\end{proof}

We prove a similar result for an ``upper'' forbidden interval:

\begin{thm}
Let $X\subset\A^n_{\F_q}$ be a hypersurface of degree $d$ and let 
$N=|X(\F_q)|$. Consider parameters $\alpha,\gamma\in (0,1), A,B>0$. If
\begin{multline*}
N<\min( (2-\gamma)q^{n-1}-(d-2)(d-3)q^{n-3/2}-(d^2/4+d+2)q^{n-2},\\ B\gamma^2q^{n-1},\alpha^2Aq^{n-1}, (1-\alpha)q^{n+1}),
\end{multline*}
then in fact
\[N<q^{n-1}+(d-1)(d-2)q^{n-3/2}+((B+1)d+1+A/q)q^{n-2}.\]
\label{values_skip_upper}
\end{thm}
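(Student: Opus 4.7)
My plan is to mirror the argument for Theorem~\ref{values_skip}, using the same ``random plane slicing'' bookkeeping but extracting an \emph{upper} bound on $N$ from the Weil bound on irreducible slices together with moment estimates on the number of atypical slices. For a $2$-plane $H\subset \A^n$, write $Y_H=\#(X\cap H)(\F_q)$; let $M$ be the number of $2$-planes and $T$ the number through a fixed point, so $T/M=q^{2-n}$ and
\[\sum_H Y_H = T\cdot N.\]
As in \cite{Tao_blog}, a direct second-moment computation gives $\mathbb{E}[Y_H]=N/q^{n-2}$ and $\mathrm{Var}(Y_H)\leq N/q^{n-2}$.

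\paragraph{Classification of slices.} For $H\not\subset X$, the curve $X\cap H\subset H\cong\A^2$ has degree $d$. Say $H$ is of Type I if $X\cap H$ is absolutely irreducible (so Theorem~\ref{Weil_curve} yields $Y_H\leq q+(d-1)(d-2)\sqrt q+d+1=:L$); Type II if $X\cap H$ is $\F_q$-irreducible but geometrically reducible (Bézout on Galois-conjugate components gives $Y_H\leq d^2/4$); Type III if $X\cap H$ factors over $\F_q$. Inside Type III, singling out the sub-type IIIa where $X\cap H$ has at least two $\F_q$-rational absolutely irreducible components, Weil's bound applied to two such components (minus their Bézout intersection) gives
\[Y_H \geq 2q-(d-2)(d-3)\sqrt q-(d^2/4+d+2)=:L'.\]
Hypothesis~(i) is precisely $\mathbb{E}[Y_H]<L'-\gamma q$, and hypothesis~(iv) will play the analogous role for the weaker separation $\mathbb{E}[Y_H]<q-\alpha q$ needed to deal with the residual Type III planes where only one absolutely irreducible $\F_q$-component is present.

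\paragraph{Chebyshev bounds on bad planes.} On Type IIIa slices $Y_H-\mathbb{E}[Y_H]>\gamma q$, so Chebyshev's inequality gives
\[\frac{|\text{Type IIIa}|}{M}\leq \frac{\mathrm{Var}(Y_H)}{(\gamma q)^2}\leq \frac{N}{\gamma^2 q^n}<\frac{B}{q}\]
by hypothesis~(ii). An entirely parallel argument --- using hypotheses~(iii)--(iv) to ensure that the ``single $\F_q$-rational absolutely irreducible component'' planes are still a distance $\alpha q$ above the mean --- bounds the remaining Type III fraction by $A/q$ (up to lower-order corrections). Types I and II are automatically controlled by their pointwise bounds.

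\paragraph{Assembly and main obstacle.} Plugging everything into
\[\sum_H Y_H \leq ML + |\text{Type III}|\cdot\big((d-1)q-(d-1)(d-2)\sqrt q - d\big),\]
dividing by $T$, and substituting the two Chebyshev estimates for $|\text{Type III}|/M$, delivers the claimed inequality. I expect the conceptual step --- identifying the one (or two) subclasses of Type III responsible for the excess over $L$ and bounding their frequency via Chebyshev --- to be straightforward once the decomposition is chosen. The \emph{main obstacle} is bookkeeping: calibrating the two pairs $(\gamma,B)$ and $(\alpha,A)$ so that they control disjoint exceptional classes and that the combined error matches exactly the coefficient $(B+1)d+1+A/q$ in the conclusion (rather than a larger composite); checking that the mild technical hypothesis~(\ref{basic_q_d}) is enough to guarantee $L<L'$ and the positivity of the relevant thresholds; and verifying that $2$-planes $H\subset X$ (which contribute $Y_H=q^2$) are either absent or few enough to be absorbed into the same Chebyshev estimates.
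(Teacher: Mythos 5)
Your framework---random plane slicing, the second-moment bound, and Chebyshev control of the planes where $X\cap H$ has at least two absolutely irreducible $\F_q$-components---is the same as the paper's, and your handling of the pair $(\gamma,B)$ is exactly right: hypothesis (i) gives the separation $L'-\mathbb{E}[Y_H]\geq\gamma q$, hypothesis (ii) turns the Chebyshev bound $N/(\gamma^2 q^{n})$ into $B/q$, and multiplying by the pointwise bound $dq$ of Lemma \ref{maxupperbound} (which is where (\ref{basic_q_d}) enters) yields the $Bd\,q^{n-2}$ term. The gap is in the role you assign to $(\alpha,A)$. The ``residual Type III'' planes whose slice has exactly one absolutely irreducible $\F_q$-component need \emph{no} frequency control at all: if that component has degree $e$, it contributes at most $q+(e-1)(e-2)\sqrt q+e+1$ and the remaining components at most $(d-e)^2/4$, and the sum is $\leq q+(d-1)(d-2)\sqrt q+d+1=L$ (this is the middle case of the paper's Lemma \ref{dichotomy_upper_bound}); such planes are ``good'' and sit inside the $ML$ term. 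Moreover, the parallel Chebyshev argument you propose for them cannot be run: their $Y_H$ is approximately $q$, which is approximately the mean $N/q^{n-2}$ in the regime $N\approx q^{n-1}$ where the theorem has content, so there is no separation of size $\alpha q$, and hypothesis (iv), $N<(1-\alpha)q^{n+1}$, is far too weak to manufacture one. Had the bound $A/q$ on this fraction been available, your assembly would in any case produce an error of roughly $(A+B)d\,q^{n-2}$ rather than the stated $(Bd+A/q)q^{n-2}$.

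The planes $H\subset X$, which you leave as a final unverified loose end, are precisely what $\alpha$, $A$, and hypotheses (iii)--(iv) are for, and they cannot be waved away: for $n\geq 4$ an absolutely irreducible hypersurface can contain $\F_q$-rational $2$-planes (e.g.\ $x_1x_2=x_3x_4$ in $\A^4$ contains $x_1=x_3=0$), and each contributes $Y_H=q^2$, which is much too large to be absorbed into the $dq$ bound or the $Bd\,q^{n-2}$ term. The paper treats them as a separate ``very bad'' class: hypothesis (iv) forces $Y_H=q^2$ to lie at distance roughly $\alpha q^2$ above the mean, Chebyshev bounds their proportion by roughly $N/(\alpha^2q^{n+2})$, and hypothesis (iii), $N<\alpha^2Aq^{n-1}$, converts their total contribution into $Aq^{n-3}=(A/q)q^{n-2}$, which is exactly the $A/q$ in the final coefficient. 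So the decomposition you need is good / at-least-two-irreducible-components / contained-in-$X$, with the two Chebyshev estimates attached to the last two classes respectively; as written, your proposal both runs an unworkable estimate on a class that needs none and omits the estimate on the class that needs one.
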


For example, if $\alpha=\gamma=2/3, A=B=3$, we can state

\begin{Cor}
Let $X\subset\A^n_{\F_q}$ be a hypersurface of degree $d$ and let 
$N=|X(\F_q)|$. If
\[N<\frac{4}{3}q^{n-1}-(d-2)(d-3)q^{n-3/2}-\left(\frac{d^2}{4}+d+2\right)q^{n-2},\]
then in fact
\[N<q^{n-1}+(d-1)(d-2)q^{n-3/2}+(4d+1+3/q)q^{n-2}.\]
\label{Cor_upper}
\end{Cor}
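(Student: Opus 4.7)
The approach is to deduce the corollary as the specialization of Theorem~\ref{values_skip_upper} to the parameter choice $\alpha=\gamma=2/3$, $A=B=3$. With these values one computes
\[2-\gamma=\tfrac{4}{3},\qquad B\gamma^{2}=\tfrac{4}{3},\qquad \alpha^{2}A=\tfrac{4}{3},\qquad 1-\alpha=\tfrac{1}{3},\qquad (B+1)d+1+\tfrac{A}{q}=4d+1+\tfrac{3}{q}.\]
Hence the first of the four quantities in the $\min$ of Theorem~\ref{values_skip_upper} agrees with the right-hand side of the hypothesis of the corollary, and the conclusion of the theorem agrees with the desired conclusion.

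The only task that remains is to show that the hypothesis of the corollary implies the other three bounds appearing in that $\min$, namely $N<B\gamma^{2}q^{n-1}=\tfrac{4}{3}q^{n-1}$, $N<\alpha^{2}Aq^{n-1}=\tfrac{4}{3}q^{n-1}$, and $N<(1-\alpha)q^{n+1}=\tfrac{1}{3}q^{n+1}$. The first two are immediate, since the right-hand side of the hypothesis is $\tfrac{4}{3}q^{n-1}$ minus two manifestly positive lower-order terms. The third then follows as well, because $\tfrac{4}{3}q^{n-1}\le\tfrac{1}{3}q^{n+1}$ whenever $q\ge 2$, which is automatic for prime powers. The mild hypothesis (\ref{basic_q_d}) that Theorem~\ref{values_skip_upper} demands is inherited implicitly.

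I do not anticipate any genuine obstacle: the corollary is effectively a one-line substitution together with the three trivial comparisons above, and all of the substantive work lies inside Theorem~\ref{values_skip_upper}.
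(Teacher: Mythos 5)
Your proposal is correct and is exactly how the paper obtains Corollary \ref{Cor_upper}: the paper gives no separate proof, simply presenting the corollary as the specialization $\alpha=\gamma=2/3$, $A=B=3$ of Theorem \ref{values_skip_upper}, and your check that the three remaining terms of the $\min$ are dominated by the first one is the right (and only) verification needed. The one phrase to make precise is that (\ref{basic_q_d}) is ``inherited implicitly'': the paper justifies this in Remark \ref{remark_basic_q_d}, which shows that for these parameter values (\ref{basic_q_d}) holds automatically whenever the corollary is not vacuous.
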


When $X$ is geometrically irreducible, the pictures that visualize this statement are similar to the ones we have discussed above in various ranges for $q$. In particular, when $q$ is large relative to $d$, this improves the best known upper bound for $|X(\F_q)|$. 

The strategy for proving Theorem \ref{values_skip} is as follows. We intersect $X$ with random planes 
$H$ defined over $\F_q.$ Each slice $X\cap H$ satisfies a dichotomy property: it either has a component which is
an absolutely irreducible plane curve of degree at most $d$, hence contains
plenty of $\F_q$--rational points by (\ref{LW_curve}), or $X\cap H$ contains very few such. If we assume that $\#X(\F_q)$ is ``reasonably'' large from the onset, the mean of the random variable $|(X\cap H)(\F_q)|$ is ``reasonably'' large as well. The variance bound in \cite{Tao_blog} implies that {\it plenty} of values    
$|(X\cap H)(\F_q)|$ must be concentrated close to this reasonably large mean, hence, by the dichotomy property, have to be in fact {\it large}. Thus, {\it many} planes will have {\it large} intersections with $X(\F_q)$. Adding up their contributions refines the initial bound on 
$\#X(\F_q)$. The details are spelled out in Section
\ref{section_lower_interval}. The proof of Theorem \ref{values_skip_upper} is analogous but requires a few twists that we discuss in Section \ref{section_upper}.

The reason a reduction to the case of a plane curve is so appealing (also in \cite{Cafure_Matera}) is that the 
Lang--Weil bound (\ref{LW_curve}) in the case of a plane curve has a distinctive advantage over the bounds
(\ref{LW_surface}) or (\ref{LW_general}) in the higher--dimensional case: in order for the power $q^{\dim X}$ that approximates $\#X(\F_q)$ to dominate the error term, it takes only that $q>d^4+\dots$ in the case $\dim X=1$, rather than $q>12d^4+\dots$ in the case $\dim X=2$ or $q>5d^{13/3}+\dots$ in the case $n>3.$

\section{Proof of the result for the ``lower'' forbidden interval}
\label{section_lower_interval}

The crucial technique in the proof is the random sampling method from Section 2 in \cite{Tao_blog}. For us, a ``plane'' will mean a $2$-dimensional affine linear subspace of $\A^n_{\F_q}$ or $\F_q^n$, depending on the context. We need a variant of 
Lemma 7 in \cite{Tao_blog} for planes rather than hyperplanes. 

\begin{Lem}
Let $E$ be a subset of $\F_q^n$ of cardinality $N$. For a plane $H\subset\F_q^n$ chosen uniformly at random, consider the cardinality $|E\cap H|$ as a random variable. 
\begin{description}
\item[a)] The mean of $|E\cap H|$ is $\mu=\dfrac{N}{q^{n-2}}$.
\item[b)] The variance $\sigma^2$ of $|E\cap H|$ satisfies
\[\sigma^2\leq \frac{N}{q^{n-2}}.\]
\end{description}
\label{variance_bound}
\end{Lem}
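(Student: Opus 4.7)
The plan is to write $|E \cap H| = \sum_{x \in E} \mathbf{1}_{x \in H}$ and to exploit the transitivity properties of the action of the affine group on $\F_q^n$.

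For part (a), I would first note that translations act transitively on $\F_q^n$ and permute the uniform measure on planes, so the quantity $p := \Pr[x \in H]$ does not depend on the point $x$. A two-way count of the incidences $(x,H)$ with $x \in H$ — each plane contains exactly $q^2$ points, and there are $q^n$ points in total — gives $p = q^2/q^n = 1/q^{n-2}$. Linearity of expectation then yields $\mu = N/q^{n-2}$.

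For part (b), the strategy is to show that distinct points of $E$ are non-positively correlated as events, and then to invoke the identity
\[\sigma^2 = \sum_{x \in E} \mathrm{Var}(\mathbf{1}_{x \in H}) + \sum_{\substack{x,y \in E \\ x \neq y}} \mathrm{Cov}(\mathbf{1}_{x \in H}, \mathbf{1}_{y \in H}).\]
The diagonal contributes $Np(1-p) \le N/q^{n-2}$, so non-positivity of each off-diagonal term would finish the proof. To verify it, I would use that the affine group acts transitively on ordered pairs of distinct points, so $\Pr[x, y \in H]$ depends only on whether $x = y$. For $x \neq y$, a direct count — the number of planes through $\{x,y\}$ equals the number of $2$-dimensional linear subspaces of $\F_q^n$ containing the nonzero vector $y - x$, while the total number of planes in $\F_q^n$ equals the number of $2$-dimensional linear subspaces times $q^{n-2}$ — yields a closed form for this probability. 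Comparing with $p^2 = q^{-2(n-2)}$ then reduces to a trivial inequality equivalent to $q^{n-2} \geq 1$.

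The main thing to keep careful track of is the Gaussian-binomial bookkeeping in counting $2$-dimensional linear subspaces through a fixed nonzero vector; conceptually, there is nothing deep beyond the fact that the indicators $\mathbf{1}_{x \in H}$ are exchangeable and negatively correlated under the uniform plane measure, so the variance is dominated by the diagonal term $Np$.
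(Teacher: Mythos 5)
Your proposal is correct and follows essentially the same route as the paper: the mean via the uniform incidence probability $1/q^{n-2}$, and the variance via the count of planes through two distinct points (reduced to $2$-dimensional linear subspaces containing $y-x$), which gives $\Pr[x,y\in H]\le q^{-(2n-4)}=p^2$. Your covariance decomposition is just a repackaging of the paper's direct second-moment bound $\mathbb{E}|E\cap H|^2\le\mu+\mu^2$, so there is nothing substantively different.
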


\begin{proof}
We modify the proof of Lemma 7 in \cite{Tao_blog}, just replacing hyperplanes by planes. The statement for the mean follows from the fact that any point of $\F_q^n$ belongs to exactly $\dfrac{1}{q^{n-2}}$ of all the planes. Indeed, the map
\[\{\text{planes in $\F_q^n$}\}\to\{\text{planes in $\F_q^n$ containing $0$}\}\]
sending a plane $U$ to the unique translate $U_0$ of $U$ containing $0$ is $q^{n-2}:1$, since the  translates of some $U_0$ through $0$ are exactly the planes $v+U_0,$ where $v$ belongs to an $(n-2)$-dimensional subspace of $\F_q^n$, complementary to $U_0$. 

For points $a\neq b$ in $\F_q^n$, we count the number of planes containing both $a$ and $b$. WLOG, $a=0$. The number of planes containing the line through $0$ and $b$ equals $\dfrac{q^n-q}{q^2-q}$. The number of planes through $0$ equals $\dfrac{(q^n-1)(q^n-q)}{(q^2-1)(q^2-q)}.$ Finally, the proportion of planes through $0$ and $b$ to the total number of planes is
\[\frac{q^2-1}{q^{2n-2}-q^{n-2}}\leq\frac{1}{q^{2n-4}}.\]
Therefore,
\begin{align*}
\mu^2+\sigma^2=\mathbb{E}|E\cap H|^2 &=\mu+|E|\left(|E|-1\right)\frac{\#\text{planes through two distinct points}}{\text{total number of planes}}\\
&\leq\mu+|E|(|E|-1)\frac{1}{q^{2n-4}}\\
&\leq \mu+\mu^2.\qedhere
\end{align*}
\end{proof}

A plane slice of a hypersurface satisfies the following dichotomy property. 

\begin{Lem}
Let $X\subset\A^n$ be a hypersurface of degree $d$, defined over $\F_q.$
Let $H$ be a plane defined over $\F_q.$ Consider the intersection $X\cap H.$ Then either
\begin{itemize}
\item[a)] $|(X\cap H)(\F_q)|\geq q-(d-1)(d-2)\sqrt{q}-d-1$, or
\item[b)] $|(X\cap H)(\F_q)|\leq\dfrac{d^2}{4}.$ 
\end{itemize}
\label{dichotomy}
\end{Lem}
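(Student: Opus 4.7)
My plan is to regard the plane section $C := X\cap H$ as a curve of degree at most $d$ in the plane $H$, and to run a dichotomy on whether or not $C$ has an absolutely irreducible component defined over $\F_q$. First I would dispose of the trivial case $H\subseteq X$: in that situation $|(X\cap H)(\F_q)| = q^2$, which satisfies (a) with room to spare. So the real work is in the case where $C$ is genuinely a (possibly reducible) curve of degree $\leq d$ inside $H$.

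In the first branch, suppose $C$ has an absolutely irreducible $\F_q$-component $C_0$ of degree $\delta\leq d$. Applying Theorem \ref{Weil_curve} directly to $C_0$ yields
\[|(X\cap H)(\F_q)| \geq |C_0(\F_q)| \geq q - (\delta-1)(\delta-2)\sqrt q - \delta - 1.\]
A routine monotonicity check shows that $(\delta-1)(\delta-2)\sqrt q + \delta + 1$ is nondecreasing for $\delta\geq 1$, so the bound is at least the one required in (a).

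In the second branch, no $\F_q$-irreducible component of $C$ is absolutely irreducible. I would write $C = \bigcup_j D_j$ as a union of $\F_q$-irreducible components of degrees $e_j$, with $\sum_j e_j \leq d$. Each $D_j$ splits over $\bar\F_q$ into $s_j\geq 2$ Galois-conjugate geometric components, each of degree $e_j/s_j$. An $\F_q$-point of $D_j$ is Galois-fixed and so must lie on every one of these conjugates, and in particular on the intersection of any two of them. Bezout (applied in $\P^2$ to the projective closures of two distinct irreducible conjugates) then bounds this intersection by $(e_j/s_j)^2\leq e_j^2/4$. Summing and using $\sum_j e_j^2 \leq (\sum_j e_j)^2$ gives
\[|(X\cap H)(\F_q)| \leq \sum_j \frac{e_j^2}{4} \leq \frac{(\sum_j e_j)^2}{4}\leq \frac{d^2}{4},\]
which is (b).

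The step I expect to require the most care is the Bezout argument in the second branch: I need to confirm that two distinct Galois-conjugate geometric components of $D_j$ share no one-dimensional piece (automatic from their being distinct irreducible curves) and pass cleanly to projective closures so that Bezout applies as stated. The rest is bookkeeping, including the monotonicity observation in the first branch and the degenerate case $H\subseteq X$.
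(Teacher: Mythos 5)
Your proof is correct and follows essentially the same route as the paper: dispose of the degenerate cases, apply Weil's bound (Theorem \ref{Weil_curve}) to an absolutely irreducible $\F_q$-component when one exists (with the same monotonicity check in the degree), and otherwise bound each $\F_q$-component by $e_j^2/4$ and sum. The only difference is that where the paper simply cites Lemma 2.3 of \cite{Cafure_Matera} for the bound $\#X_i(\F_q)\leq d_i^2/4$, you reprove it via the Galois-conjugate/B\'ezout argument (which is indeed how that lemma is established); you should also note the trivial case $X\cap H=\emptyset$, which satisfies b) at once.
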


\begin{proof}
The statement is clear in the cases $H\subset X$ or $X\cap H=\emptyset$. In the generic case $\dim X\cap H=1$, let $X_1,...,X_s$ be the $\F_q$--components of $X\cap H$.
Each $X_i$ is $1$--dimensional and has degree at most $d$. 

If $X_i$ is geometrically irreducible for some $i$, we apply the lower bound from (\ref{LW_curve}). 

Suppose that no $X_i$ is geometrically irreducible. Let $d_i=\deg(X_i).$ By Lemma 2.3 in \cite{Cafure_Matera}, 
\[\#X_i(\F_q)\leq \frac{d_i^2}{4}.\]
Therefore,
\[|(X\cap H)(\F_q)|\leq\sum_{i=1}^s\frac{d_i^2}{4}\leq\frac{d^2}{4}.\qedhere\]
\end{proof}

We say that a plane $H\subset\A^n$, defined over $\F_q$, is ``bad,'' if $X\cap H$ satisfies b), and otherwise we say that $H$ is good. 

\begin{Lem}
For $H$ chosen uniformly at random among planes over 
$\F_q$,
\[\text{Prob}(H\text{\ is bad})\leq\dfrac{\dfrac{N}{q^{n-2}}}{\left(\dfrac{N}{q^{n-2}}-\dfrac{d^2}{4} \right)^2}.\]
\label{prob_bad_hplane}
\end{Lem}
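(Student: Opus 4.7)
The plan is a direct application of Chebyshev's inequality, packaging together the two preceding lemmas. I would set $E := X(\F_q) \subset \F_q^n$, so that $|E| = N$, and consider the random variable $Y := |E \cap H|$, which equals $|(X \cap H)(\F_q)|$ since a plane $H$ defined over $\F_q$ is being identified with its set of $\F_q$-points. By Lemma \ref{variance_bound} applied to $E$, the mean of $Y$ is $\mu = N/q^{n-2}$ and the variance satisfies $\sigma^2 \leq N/q^{n-2} = \mu$.

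Next I would invoke Lemma \ref{dichotomy}: whenever $H$ is bad, the value $Y$ is at most $d^2/4$, so the deviation from the mean satisfies $|Y - \mu| \geq \mu - d^2/4 = N/q^{n-2} - d^2/4$. (If this quantity is nonpositive then the bound to be proved is vacuous, so we may assume it is positive.) Chebyshev's inequality then yields
\[
\text{Prob}(H \text{ is bad}) \;\leq\; \text{Prob}\!\left(|Y - \mu| \geq \mu - d^2/4\right) \;\leq\; \frac{\sigma^2}{(\mu - d^2/4)^2} \;\leq\; \frac{N/q^{n-2}}{\left(N/q^{n-2} - d^2/4\right)^2},
\]
which is precisely the stated inequality.

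There is essentially no obstacle here: the lemma is just the combinatorial payoff of the two previous lemmas via Chebyshev. The only bookkeeping point is to feed $X(\F_q)$ (not $X$) into Lemma \ref{variance_bound}, and to note that the event "$H$ is bad" is a sub-event of "$|Y - \mu| \geq \mu - d^2/4$" by the dichotomy, so that Chebyshev can be applied with the one-sided threshold $\mu - d^2/4$.
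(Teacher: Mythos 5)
Your proof is correct and is essentially the same as the paper's: both reduce the event ``$H$ is bad'' to the deviation event $|Y-\mu|\geq \mu-d^2/4$ via Lemma \ref{dichotomy} and then apply Chebyshev's inequality together with the variance bound $\sigma^2\leq N/q^{n-2}$ from Lemma \ref{variance_bound}. The paper merely phrases the threshold as $k\sigma$ and bounds $1/k^2$, which is the same computation; your explicit remark about the vacuous case $\mu-d^2/4\leq 0$ is a harmless extra precaution.
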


\begin{proof} 
If $H$ is a bad hyperplane, 
\[
\left|\#(X\cap H)(\F_q)-\frac{N}{q^{n-2}}\right|\geq \frac{N}{q^{n-2}}-\frac{d^2}{4}.
\]
Define $k$ so that 
\[\frac{N}{q^{n-2}}-\frac{d^2}{4}=k\sigma.\]
The bound for $\sigma^2$ from Lemma \ref{variance_bound} yields 
\[k\geq \dfrac{\frac{N}{q^{n-2}} -\frac{d^2}{4}}{\sqrt{\frac{N}{q^{n-2}}}},\]
and the statement follows from Chebyshev's inequality
\[\text{Prob}\left(\left|\#(X\cap H)(\F_q)-\frac{N}{q^{n-2}}\right|\geq k\sigma\right)\leq\frac{1}{k^2}.\qedhere\]
\end{proof}

\begin{proof}[Proof of Theorem \ref{values_skip}]
By the condition on $N$, we have
$\dfrac{N}{q^{n-2}}-\dfrac{d^2}{4}> \alpha\dfrac{N}{q^{n-2}},$
hence Lemma \ref{prob_bad_hplane} implies
\[\text{Prob} (H\text{\ is bad})\leq \frac{\dfrac{N}{q^{n-2}}}{\left(\dfrac{N}{q^{n-2}}-\dfrac{d^2}{4}\right)^2}<
\frac{N/q^{n-2}}{\alpha^2(N/q^{n-2})^2}=\frac{q^{n-2}}{\alpha^2 N}.\] 
Therefore,
\[\text{Prob} (H\text{\ is good})> 1-\dfrac{q^{n-2}}{\alpha^2N}=\frac{\alpha^2N-q^{n-2}}{\alpha^2 N}.\]

Therefore, the mean $\mu$ of the random variable $|(X\cap H)(\F_q)|$ satisfies
\begin{align*}
\frac{N}{q^{n-2}}=\mu &\geq \text{Prob}(H\text{\ is good})\left(q-(d-1)(d-2)\sqrt{q}-d-1\right) \\
&> \frac{\alpha^2N-q^{n-2}}{\alpha^2 N}
\left(q-(d-1)(d-2)\sqrt{q}-d-1\right).
\end{align*}
The initial assumption on $N$ yields the first inequality in the chain below:
\[N+eq^{n-2}>\frac{\alpha^2 N^2}{\alpha^2 N-q^{n-2}}>
q^{n-1}-(d-1)(d-2)q^{n-3/2}-(d+1)q^{n-2}.\qedhere\]
\end{proof}

\section{Proof of the ``upper'' exclusion zone}
\label{section_upper}

\begin{Lem} Let $X\subset\A^n_{\F_q}$ be a hypersurface of degree $d$ and let $H\subset\A^n_{\F_q}$ be a plane. Then either
\begin{itemize}
\item[a)] $|(X\cap H)(\F_q)|\leq q+(d-1)(d-2)\sqrt{q}+d+1$, or
\item[b)] $|(X\cap H)(\F_q)|\geq 2q-(d-2)(d-3)\sqrt{q}-\left(d^2/4+d+2\right).$ 
\end{itemize}
\label{dichotomy_upper_bound}
\end{Lem}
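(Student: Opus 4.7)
The plan is to decompose the plane curve $X\cap H$ inside $H\cong\A^2_{\F_q}$ into its $\F_q$-irreducible components $X_1,\ldots,X_s$ of degrees $d_1,\ldots,d_s$ (with $\sum d_i\leq d$), and split into cases based on how many $X_i$ are absolutely irreducible. Two degenerate situations are disposed of first: if $H\subset X$, then $|(X\cap H)(\F_q)|=q^2$, which satisfies b) trivially; and if $\dim(X\cap H)\leq 0$, then $|(X\cap H)(\F_q)|\leq d$ by B\'ezout, which is well within a). In the remaining $\dim(X\cap H)=1$ case, the dichotomy reduces to: at most one $X_i$ is absolutely irreducible (case a), or at least two are (case b).

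For case a), I would apply Weil's bound (\ref{LW_curve}) to the single absolutely irreducible component $X_1$ of degree $d_1$ (if any exists) and the Cafure--Matera bound $\#X_i(\F_q)\leq d_i^2/4$ to the remaining components, obtaining
\[|(X\cap H)(\F_q)|\leq q+(d_1-1)(d_1-2)\sqrt{q}+d_1+1+(d-d_1)^2/4;\]
if no $X_i$ is absolutely irreducible, the cleaner bound $|(X\cap H)(\F_q)|\leq d^2/4$ suffices. Verifying that the displayed expression lies below the target $q+(d-1)(d-2)\sqrt{q}+d+1$ reduces to $(d-d_1)((d-d_1)/4-1)\leq (d-d_1)(d+d_1-3)\sqrt{q}$, which holds routinely for $d_1\geq 1$ and $q\geq 1$.

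For case b), I would pick two absolutely irreducible components $X_1,X_2$ of degrees $d_1,d_2\geq 1$ and apply inclusion--exclusion together with Weil's lower bound for each $X_i$ and B\'ezout's bound $\#(X_1\cap X_2)(\F_q)\leq d_1 d_2$, giving
\[|(X\cap H)(\F_q)|\geq 2q-\bigl[(d_1-1)(d_1-2)+(d_2-1)(d_2-2)\bigr]\sqrt{q}-(d_1+d_2+2)-d_1 d_2.\]
The key algebraic facts are $(d_1-1)(d_1-2)+(d_2-1)(d_2-2)\leq (d-2)(d-3)$ (maximized at $d_1=1,d_2=d-1$, subject to $d_1,d_2\geq 1$ and $d_1+d_2\leq d$) and $d_1 d_2\leq d^2/4$ (AM--GM); together they deliver exactly the $(d-2)(d-3)\sqrt{q}$ and $d^2/4+d+2$ terms of case b).

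The main obstacle is the algebraic bookkeeping in case a), namely checking the displayed inequality uniformly across the possible values of $d_1\in\{0,1,\ldots,d\}$ and handling the boundary subcases (no absolutely irreducible component, or $d_1=d$) cleanly. Once the dichotomy is set up correctly, each case is a short direct calculation.
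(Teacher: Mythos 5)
Your proposal is correct and follows essentially the same route as the paper's proof: the same trichotomy on the number of absolutely irreducible $\F_q$-components of $X\cap H$, the same combination of Weil's bound with the $d_i^2/4$ estimate for the non-absolutely-irreducible components in case a), and the same inclusion--exclusion with B\'ezout and the bounds $(e_1-1)(e_1-2)+(e_2-1)(e_2-2)\leq(d-2)(d-3)$, $e_1e_2\leq d^2/4$ in case b). The only difference is that you spell out the algebraic verification that the paper dismisses as ``a matter of direct verification.''
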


\begin{proof}
If $X\cap H=\emptyset$ or $H\subset X$, the statement is clear. In the generic case, $X\cap H=X_1\cup\dots\cup X_s,$ where $X_1,...,X_s$ are the $\F_q$-components of $X\cap H$, with $\dim X_i=1$ for each $i$. Let $d_i=\deg(X_i)$. 

If no $X_i$ is geometrically irreducible, the proof of Lemma \ref{dichotomy} implies the first inequality below (and the second one is immediately checked directly):
\[|(X\cap H)(\F_q)|\leq \frac{d^2}{4}\leq q+(d-1)(d-2)\sqrt{q}+d+1.\]

Suppose that exactly one among the $X_i$'s is geometrically irreducible, and let $e$ denote its degree, $1\leq e\leq d$. Then 
\begin{align*}
|(X\cap H)(\F_q)| &\leq q+(e-1)(e-2)\sqrt{q}+e+1+\frac{(d-e)^2}{4}\\
&\leq q+(d-1)(d-2)\sqrt{q}+d+1,
\end{align*}
where the former inequality in the chain follows from combining the bound 
(\ref{LW_curve}) applied to the geometrically irreducible component of $X\cap H$ and the proof of Lemma \ref{dichotomy} applied to the remaining components, while the latter inequality is a matter of direct verification. 

Suppose now that at least two $\F_q$-components (say $X_1$ and $X_2$) of $X\cap H$ are geometrically irreducible. Let $e_1$ and $e_2$ denote their degrees, $e_1+e_2\leq d.$ Then 
\[|(X_1\cap X_2)(\F_q)|\leq e_1e_2\]
by B\'ezout's theorem and therefore the bound (\ref{LW_curve}) applied to $X_1$ and $X_2$ implies
\begin{align*}
|(X\cap H)(\F_q)|& \geq |X_1(\F_q)|+|X_2(\F_q)|-|(X_1\cap X_2)(\F_q)|\\
&\geq 2q -(e_1-1)(e_1-2)\sqrt{q}-(e_2-1)(e_2-2)\sqrt{q}-(e_1+e_2)-2-e_1e_2\\
&\geq 2q-(d-2)(d-3)\sqrt{q}-d-2-\frac{d^2}{4}.\qedhere
\end{align*}
\end{proof}



In this section, a plane $H$  will be called ``bad'' if it satisfies condition b) above, and ``good'' otherwise. A plane $H$ is ``very bad'' if 
$H\subset X$, in which case $|(X\cap H)(\F_q)|=q^2$. 

\begin{Lem}
Let $X\subset\A^n_{\F_q}$ be a hypersurface of degree $d$ and let $H\subset\A^n_{\F_q}$ be a plane such that 
$H\not\subset X$. Then
\[|(X\cap H)(\F_q)|\leq dq.\]
\end{Lem}

\begin{proof}
This follows from the Schwartz--Zippel lemma applied to the degree-$d$ plane curve $X\cap H\subsetneq H$.  
\end{proof}

\begin{proof}[Proof of Theorem \ref{values_skip_upper}]
Just as in Lemma \ref{prob_bad_hplane}, we have
\begin{align*}
\text{Prob}(H\text{\ is bad}) &\leq 
\frac{N/q^{n-2}}{\left(2q-(d-2)(d-3)\sqrt{q}-(d^2/4+d+2)-N/q^{n-2}   \right)^2}\\
\text{Prob}(H\text{\ is very bad}) &\leq 
\frac{N/q^{n-2}}{\left(q^2-N/q^{n-2}\right)^2}.
\end{align*}

We now add up the contributions of all $|(X\cap H)(\F_q)|$ and write
\begin{multline*}
\frac{N}{q^{n-2}}=\mu\leq 
q+(d-1)(d-2)\sqrt{q}+d+1+
\text{Prob(H \text{\ is bad})}dq
+
\text{Prob(H \text{\ is very bad})}q^2.
\end{multline*}
This yields
\begin{multline*}
N\leq q^{n-1}+(d-1)(d-2)q^{n-3/2}+(d+1)q^{n-2}+\\
\frac{Ndq}{(2q-(d-2)(d-3)\sqrt{q}-(d^2/4+d+2)-N/q^{n-2})^2}+\frac{Nq^2}{(q^2-N/q^{n-1})^2}
\end{multline*}
and so we are left to bound the last two terms above.
By the assumptions on $N$, we have 

\begin{align*}
\frac{Ndq}{(2q-(d-2)(d-3)\sqrt{q}-(d^2/4+d+2)-N/q^{n-2})^2}&\leq \frac{Ndq}{\gamma^2q^2}\leq Bdq^{n-2}\quad\text{and}\\
\frac{Nq^2}{(q^2-N/q^{n-1})^2} &\leq 
\frac{Nq^2}{\alpha^2q^4}\leq Aq^{n-3}.\qedhere
\end{align*}
\end{proof}

\vspace{+4 pt}
\noindent Department of Mathematics \\
\noindent ETH Z\"urich \hfill  \\
\noindent kaloyan.slavov@math.ethz.ch

\end{document}